\documentclass{amsart}[12pt]
\usepackage[dvipsone]{graphicx}
\usepackage{psfrag}
\usepackage{epsfig,amsmath,amssymb,enumerate}
\usepackage[all]{xy}

\newcommand{\lga}{\longrightarrow}
\newcommand{\lgaf}{\longleftarrow}

\newcommand {\Z}{\mathbb Z}
\newcommand {\R}{\mathbb R}
\newcommand {\N}{\mathbb N}

\newcommand {\F}{\mathbb F}

\newcommand {\PP}{\mathcal P}
\newcommand {\QQ}{\mathcal Q}

\newcommand {\sub}{\subset}

\newtheorem{theorem}{Theorem}[section]
\newtheorem{lemma}[theorem]{Lemma}

\newtheorem{proposition}[theorem]{Proposition}
\newtheorem{corollary}[theorem]{Corollary}

\theoremstyle{definition}
\newtheorem{definition}[theorem]{Definition}

\theoremstyle{remark}
\newtheorem{remark}[theorem]{Remark}

\numberwithin{equation}{section}

\begin{document}

\title{Weak ${\mathcal Z}$-structures and one-relator groups}


\author{M. C\'ardenas}
\address{Departamento de Geometr\'{\i}a y Topolog\'{\i}a, Fac. Matem\'aticas, Universidad de Sevilla,
C/. Tarfia s/n 41012-Sevilla, Spain} \email{mcard@us.es}
\author{F. F. Lasheras}
\address{Departamento de Geometr\'{\i}a y Topolog\'{\i}a, Fac. Matem\'aticas, Universidad de Sevilla,
C/. Tarfia s/n 41012-Sevilla, Spain} \email{lasheras@us.es}
\author{A. Quintero}
\address{Departamento de Geometr\'{\i}a y Topolog\'{\i}a, Fac. Matem\'aticas, Universidad de Sevilla,
C/. Tarfia s/n 41012-Sevilla, Spain} \email{quintero@us.es}

\subjclass[2020]{Primary 57M07; Secondary 57M10}

\keywords{Weak ${\mathcal Z}$-structure, one relator-group, proper homotopy}
\date{\today}


\begin{abstract} Motivated by the notion of boundary for hyperbolic and $CAT(0)$ groups, Bestvina \cite{B} introduced the notion of a (weak) $\mathcal Z$-structure and (weak) $\mathcal Z$-boundary for a group $G$ of type $\mathcal F$ (i.e., having a finite $K(G,1)$ complex), with implications concerning the Novikov conjecture for $G$. Since then, some classes of groups have been shown to admit a weak $\mathcal Z$-structure (see \cite{Gui1} for example), but the question whether or not every group of type $\mathcal F$ admits such a structure remains open. In this paper, we show that every torsion free one-relator group admits a weak $\mathcal Z$-structure, by showing that they are all properly aspherical at infinity; moreover, in the $1$-ended case the corresponding weak $\mathcal Z$-boundary has the shape of either a circle or a Hawaiian earring depending on whether the group is a virtually surface group or not. Finally, we extend this result to a wider class of groups still satisfying a Freiheitssatz property.
\end{abstract}

\maketitle

\section{Introduction}
We recall that a compact metrizable space $W$ is a compactification of a (path connected) locally compact metrizable space $X$ if it contains a homeomorphic copy $X \sub W$ as a dense open subset. Furthermore, we say that $W$ is a $\mathcal Z$-compactification of $X$ if $Z = W - X$ is a $\mathcal Z$-set in $W$, i.e., for every open set $U \sub W$ the inclusion $U - Z \hookrightarrow U$ is a homotopy equivalence; equivalently (if $X$ is an ANR), there is a homotopy $H : W \lga W$ with $H_0=id_W$ and $H_t(W) \sub X$ for all $t >0$. And in this case, we say that $Z$ is a $\mathcal Z$-boundary for $X$. The model example is that in which $W$ is a compact manifold and $Z \subseteq \partial W$ is a closed subset.\\
\indent From now on all spaces will be ANRs; in fact, we will deal with (connected) locally finite CW-complexes.\\
\indent In \cite{B} Bestvina introduced the notion of $\mathcal Z$-structure and $\mathcal Z$-boundary for a group (of type $\mathcal F$) as an attempt to generalize the already existing notion of boundary for hyperbolic and $CAT(0)$ groups; namely,
\begin{definition} \label{structure} A $\mathcal Z$-structure on a group $G$ is a pair $(W, Z)$ of spaces satisfying:
\begin{itemize}
\item[(1)] $W$ is a contractible ANR,
\item[(2)] $Z$ is a $\mathcal Z$-set in $W$,
\item[(3)] $X = W - Z$ admits a proper, free and cocompact action by $G$, and
\item[(4)] (nullity condition) For any open cover of $W$, and any compact subset $K \subseteq X$, all but finitely many translates of $K$ lie in some element of the cover.
\end{itemize}
Observe that is not necessary that $W$ be finite-dimensional, by \cite{Moran, Moran2}. If only conditions (1)-(3) are satisfied, then $(W,Z)$ is called a weak $\mathcal Z$-structure on $G$, and we refer to $Z$ as a (weak) $\mathcal Z$-boundary for $G$.
\end{definition}
\indent An additional condition can be added to the above:
\begin{itemize}
\item[(5)] The action of $G$ on $X$ can be extended to $W$.
\end{itemize}
If conditions (1)-(5) are satisfied, then $(W,Z)$ is called a $\mathcal{EZ}$-structure on $G$. It was shown in \cite{FL} that the Novikov conjecture holds for any (torsion free) group admitting an $\mathcal{EZ}$-structure. Examples of groups admitting an $\mathcal{EZ}$-structure are (torsion free) $\delta$-hyperbolic and $CAT(0)$ groups, see \cite{BM, B}.\\
\indent Although not stated explicitly, it follows that such a group $G$ as in Definition \ref{structure} must be of type $\mathcal F$ (see \cite[Prop. 1.1]{Gui1}). The more conditions on a $\mathcal Z$-structure on a group the better; nonetheless, a weak $\mathcal Z$-boundary already carries significant information about the group and, when it exists, it is well-defined up to shape, and it is always a first step towards finding a stronger structure on it. The question whether or not every group of type $\mathcal F$ admits a (weak) $\mathcal Z$-structure still remains open. In this paper, we give a positive answer to this question for a class of groups containing all torsion free one-relator groups. For this, we first use some previous work from \cite{CLQR2, LasRoy} to show that they are all properly aspherical at infinity, and then combine this with some recent work in \cite{CLQR_JPAA} to characterize the corresponding boundary in the $1$-ended case. Our main results are Theorems \ref{thm1} and \ref{thm2} below.
\begin{theorem} \label{thm1} Every finitely generated, torsion free one-relator group $G$ admits a weak $\mathcal Z$-structure. Moreover, if $G$ is $1$-ended then the corresponding weak $\mathcal Z$-boundary has the shape of either a circle or a Hawaiian earring depending on whether $G$ is a virtually surface group or not.
\end{theorem}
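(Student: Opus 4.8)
The plan is to establish the weak $\mathcal Z$-structure first and then separately analyze the shape of the boundary in the $1$-ended case. For the existence part, I would work from the notion of being \emph{properly aspherical at infinity}, which the paper promises to establish for all torsion free one-relator groups using the cited work \cite{CLQR2, LasRoy}. The key observation is that a finitely generated torsion free one-relator group $G$ is of type $\mathcal F$: by the Lyndon--Magnus theory such groups are either free (when the relator is a proper power only the torsion-free hypothesis rules this out) or have a finite aspherical $2$-complex as a $K(G,1)$, namely the presentation $2$-complex, which is aspherical by Lyndon's theorem. Thus there is a finite $K(G,1)$ whose universal cover $X$ is a contractible, locally finite, $2$-dimensional CW-complex on which $G$ acts properly, freely and cocompactly. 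The strategy is then to produce a $\mathcal Z$-compactification $W = X \cup Z$ of $X$; being properly aspherical at infinity should supply exactly the proper-homotopy-theoretic input needed to guarantee that the end compactification (or a suitable compactification built from the proper homotopy type at infinity) yields a $\mathcal Z$-set boundary, so that conditions (1)--(3) of Definition \ref{structure} hold.

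First I would treat the free and the non-free cases separately, since free groups are $0$- or multi-ended and handled classically (a free group is $\delta$-hyperbolic, hence admits even an $\mathcal{EZ}$-structure), reducing the substantive content to the $1$-relator presentation $2$-complex case. Next I would invoke the ``properly aspherical at infinity'' property: the point is that the universal cover $X$, being a contractible $2$-complex, is $\mathcal Z$-compactifiable provided its proper homotopy type at infinity is sufficiently tame, and the cited results are designed to verify this tameness for one-relator groups. I would then check the nullity-type issues are not needed here, since only a \emph{weak} structure is claimed; conditions (1)--(3) suffice. The contractibility of $W$ follows because $X$ is contractible and $Z$ is a $\mathcal Z$-set, which forces the inclusion $X \hookrightarrow W$ to be a homotopy equivalence.

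For the second assertion, assuming $G$ is $1$-ended, I would identify the shape of the weak $\mathcal Z$-boundary $Z$ by analyzing the proper homotopy type at infinity of $X$, drawing on \cite{CLQR_JPAA}. The dichotomy between a circle and a Hawaiian earring is a reflection of whether $\pi_1$ at infinity (or the appropriate tower of fundamental groups of neighborhoods of infinity) is finitely generated and behaves like that of a surface, versus accumulating infinitely many nontrivial loops. A $1$-ended one-relator group that is virtually a surface group has $X$ quasi-isometric to the hyperbolic or Euclidean plane, whose single end has the shape of a circle; I would make this precise by showing the inverse system of neighborhoods of the end is pro-homotopy equivalent to the constant system on $S^1$. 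In the remaining $1$-ended case, the relevant inverse system fails to stabilize and instead converges, in the sense of shape, to the Hawaiian earring; here I would exploit a Freiheitssatz-type argument to control the fundamental pro-group at infinity, exhibiting a cofinal tower whose bonding maps realize the Hawaiian earring as an inverse limit up to shape equivalence.

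The hard part will be the second statement, specifically verifying that the non-surface $1$-ended case genuinely produces the shape of the Hawaiian earring rather than some other one-dimensional shape. This requires a precise computation of the fundamental pro-group (and higher shape invariants) of the end of $X$, and a clean dichotomy criterion separating the virtually-surface-group case from all others. I expect the surface-group recognition to rest on a combination of the classification of one-relator groups with specific splitting or cohomological properties and the identification of $Z$ with $S^1$ via stability of the end; the subtlety is ensuring that \emph{every} remaining torsion free $1$-ended one-relator group falls on the Hawaiian-earring side, with no intermediate possibilities, which is where the careful use of the semistability and the Freiheitssatz property from \cite{CLQR_JPAA, LasRoy} becomes essential.
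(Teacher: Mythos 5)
Your proposal has a genuine gap at its core: the mechanism for producing the $\mathcal Z$-compactification. You propose to compactify the universal cover $X = \widetilde{K}_{\mathcal P}$ itself, suggesting that proper asphericity at infinity lets ``the end compactification (or a suitable compactification built from the proper homotopy type at infinity)'' have a $\mathcal Z$-set boundary. This fails on two counts. First, the Freudenthal end compactification is essentially never a $\mathcal Z$-compactification for a $1$-ended group: adding a single point at infinity to ${\R}^2$ gives $S^2$, and a point in $S^2$ is not a $\mathcal Z$-set (a punctured disk neighborhood is not contractible). Second, and more seriously, $\mathcal Z$-compactifiability is \emph{not} a proper homotopy invariant, so knowing the proper homotopy type at infinity of $X$ (however tame) cannot by itself yield a $\mathcal Z$-compactification of $X$; indeed, combining \cite{Fe} and \cite{Gui4} there is a locally finite $2$-polyhedron that is not $\mathcal Z$-compactifiable although its product with $I^9$ is. The paper's actual route is to \emph{stabilize}: it never compactifies $X$, but rather $X \times I^9$. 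The key lemma (Lemma \ref{CS}) passes to the Hilbert cube manifold $X \times I^\infty$, where Chapman--Siebenmann theory applies; the two obstructions to $\mathcal Z$-compactifiability vanish precisely because the fundamental pro-group is pro-(finitely generated free) (so the relevant Whitehead and $\widetilde{K}_0$ terms die), and then Ferry's theorem \cite{Fe} cuts back down to the finite-dimensional $X \times I^9$. You also never verify inward tameness (hypothesis (a) of that lemma); in the paper this comes from Corollary \ref{flute}, which shows $X$ is proper homotopy equivalent to ${\R}^2$ or to the explicit polyhedron $Y_\infty$, both visibly inward tame. Since the weak $\mathcal Z$-structure lives on $X \times I^9$ rather than $X$, the group action and contractibility are inherited trivially; but without the stabilization trick your existence argument does not go through.

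Two further gaps. Your reduction ``free versus non-free'' does not cover the infinite-ended case: a torsion free one-relator group with infinitely many ends need not be free (e.g., a surface group free-producted with a free group is one-relator), and for such groups the $1$-ended analysis of the end does not apply. The paper handles this via Stallings' structure theorem and Grushko to write $G$ as (free group) $*$ (one-relator group with at most one end), and then invokes Tirel's theorem \cite{T} that weak $\mathcal Z$-structures persist under free products. Finally, for the boundary dichotomy you correctly identify the hard point --- ruling out ``intermediate'' shapes between the circle and the Hawaiian earring --- but you leave it unresolved. In the paper this is exactly Corollary \ref{flute}: the telescopic form of the fundamental pro-group (Lemma \ref{enhance}(a)) together with proper asphericity at infinity and a proper Whitehead theorem pins down the proper homotopy type to one of the model spaces $Y_m$, and the $3$-manifold results of \cite{CLQR_JPAA} (two or infinitely many plane boundary components, with two occurring exactly for virtually surface groups) force $m = 1$ or $m = \infty$, i.e., circle or Hawaiian earring, with nothing in between. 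The shape statement for the boundary of $X \times I^9$ then follows from uniqueness of $\mathcal Z$-boundaries up to shape \cite[Cor. 3.8.15]{Gui3}, another ingredient your sketch leaves implicit.
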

Bestvina \cite{B} already pointed out that the Baumslag-Solitar group $\langle x,t; t^{-1}x t = x^2 \rangle$ admits a $\mathcal Z$-boundary homeomorphic to the Cantor-Hawaiian earring, which is shape equivalent to the ordinary Hawaiian earring (see \cite{MarSe} as a general reference for shape theory). It is worth mentioning that for the particular class of Baumslag-Solitar groups a stronger $\mathcal{EZ}$-structure has been recently described in \cite{Gui2}.\\

More generally, we may construct the following class $\mathcal C$ of finitely presented groups starting off from one-relator group presentations as follows. Let $G$ and $H$ be finitely
generated one-relator groups, and
assume ${\PP} = \langle X; r \rangle$ and ${\QQ} = \langle Y ; s
\rangle$ are presentations of $G$ and $H$ with a single relation, respectively. Let
$V \sub X$ and $W \sub Y$, together with a bijection $\eta : V \lga
W$, be (possibly empty) subsets not containing all the generators involved in any of
the relators of the corresponding presentation. We declare the
corresponding amalgamated product $G*_F H$ associated with $\eta$
(over a free group of rank $card(V)=card(W)$) to be in our class $\mathcal C$ together with the obvious presentation for it obtained from ${\PP}$ and ${\QQ}$. It seems
natural to consider the class ${\mathcal C}$ of all finitely
generated one-relator groups together with those finitely presented
groups which can be obtained by successive applications of the construction above, so that ${\mathcal C}$ is closed under
amalgamated products (over free subgroups) of the type just described. The group presentations obtained in this way still satisfy a Freiheitssatz property, see \cite{LasRoy} for more details. Henceforth, we will refer to those groups as in $\mathcal C$ as ``generalized" one-relator groups.
\begin{remark} \label{ex1} The first interesting examples of groups in the class
${\mathcal C}$ (other than one-relator groups and their free products) are those groups $G$
given by a presentation of the form $\langle X; r,s \rangle$, where
$r,s$ are cyclically reduced words so that $r \in F(Y)$, $Y
\subsetneq X$, and $s \in F(X) - F(Y)$ misses at least one generator
in $Y$ which occurs in $r$. Indeed, one can obtain $G$ as an
amalgamated product $\langle Y ; r \rangle *_F \langle (X - Y) \cup
Z ; s \rangle$ over a free group of rank $card(Z)$, where $Z
\subsetneq Y$ is the subset consisting of those generators in $Y$
which occur in $s$.
\end{remark}
\begin{remark} From the above remark, one can see that
Higman's group $H$, with presentation $\langle a,b,c,d ;
a^{-2}b^{-1}ab, b^{-2}c^{-1}bc, c^{-2}d^{-1}cd, d^{-2}a^{-1}da
\rangle$, is in ${\mathcal C}$. Indeed, $H$ can be expressed as an
amalgamated product (of the type described above) of two copies of
$\langle x,y,z ; x^{-2}y^{-1}xy, y^{-2}z^{-1}yz \rangle$ over a free
subgroup of rank $2$ (see \cite{H}).
\end{remark}
\begin{remark} It was also shown in \cite{LasRoy} that every finitely presented group
$G$ given by a staggered presentation ${\PP}$ is in ${\mathcal C}$. We recall that $\langle X ; R \rangle$ is defined
to be a {\it staggered} presentation if there is a subset $X_0 \sub
X$ so that both $R$ and $X_0$ are linearly ordered in such a way
that: (i) each relator $r \in R$ contains some $x \in X_0$; (ii) if
$r, r'$ are relators with $r < r'$, then $r$ contains some $x \in
X_0$ that precedes all elements of $X_0$ occurring in $r'$, and $r'$
contains some $y \in X_0$ that comes after all those occurring in $r$
(see \cite{LS}).
\end{remark}
Theorem \ref{thm1} above together with the results in \cite{LasRoy} yield the following generalization.
\begin{theorem}  \label{thm2} Every torsion free, $1$-ended generalized one-relator group $G \in {\mathcal C}$ admits a weak $\mathcal Z$-structure. Moreover, the corresponding weak $\mathcal Z$-boundary has the shape of either a circle or a Hawaiian earring depending on whether $G$ is a virtually surface group or not.
\end{theorem}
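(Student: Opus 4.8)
The plan is to argue by induction on the number of amalgamation steps used to exhibit $G$ as a member of $\mathcal C$. The base case, in which $G$ is a finitely generated torsion free one-relator group, is exactly Theorem \ref{thm1}. For the inductive step I would write $G = A *_F B$, where $A, B \in \mathcal C$ are torsion free and, by the inductive hypothesis, \emph{properly aspherical at infinity}, and where the amalgamation is over a free group $F$ of the prescribed type. The goal of the inductive step is to show that proper asphericity at infinity is inherited by $G$; once this is established for every $G \in \mathcal C$, the existence of a weak $\mathcal Z$-structure follows from the same implication used already for Theorem \ref{thm1} (via \cite{CLQR2, LasRoy}), and the boundary shape in the $1$-ended case follows from \cite{CLQR_JPAA}.

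To carry out the inductive step I would first assemble a finite $K(G,1)$ as a \emph{tree of spaces} modelled on the Bass--Serre tree $T$ of the splitting $A *_F B$, gluing finite classifying complexes for $A$ and $B$ along a finite $K(F,1)$, which is a wedge of circles since $F$ is free. The universal cover $\widetilde K$ is then a tree of spaces whose vertex pieces are copies of the universal covers of the two factor complexes and whose edge pieces are copies of the universal cover of a wedge of circles, that is, trees. The decisive feature is that the edge groups are \emph{free}: a finite wedge of circles is aspherical and one-dimensional, so its proper homotopy at infinity carries no obstruction in dimensions $\geq 2$, and this is what permits the proper asphericity of the vertex pieces to propagate across the gluings.

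The Freiheitssatz property from \cite{LasRoy} is what makes these gluings controllable. It guarantees that the generators of the amalgamating free subgroup map, in each factor, to elements missing at least one generator occurring in the defining relator, so that the inclusions $F \hookrightarrow A$ and $F \hookrightarrow B$ are suitably unknotted at the level of the end structure. Using this I would check that the fundamental pro-group at infinity of $\widetilde K$ remains a free pro-group and that the higher proper homotopy pro-groups at infinity still vanish, exactly as in the one-relator case. This yields proper asphericity at infinity for $G$, and hence a weak $\mathcal Z$-structure; since free products (the case $F$ empty) are already covered here, the argument in fact delivers the structure for all torsion free $G \in \mathcal C$, the $1$-endedness hypothesis being needed only for the boundary.

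For the boundary shape I would appeal to \cite{CLQR_JPAA}: in the $1$-ended case the weak $\mathcal Z$-boundary is determined up to shape by the pro-homotopy type of the end of $\widetilde K$, which by the above is represented by an inverse system of wedges of circles. When $G$ is a virtually surface group this system is shape-equivalent to a single circle, and otherwise the number of essential circles grows without bound, producing the shape of a Hawaiian earring. The step I expect to be hardest is the inductive verification that no new spherical obstruction appears at infinity where the edge trees meet the vertex pieces, so that the higher proper homotopy pro-groups at infinity continue to vanish after amalgamation; it is precisely here that the Freiheitssatz, by pinning down how the free edge groups embed, does the essential work.
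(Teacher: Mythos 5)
Your proposal is correct and follows essentially the same route as the paper: the paper obtains exactly your inductive step by ``taking a closer look'' at the proofs of \cite[Thm. 1.13]{LasRoy} and \cite[Prop. 1.18]{LasRoy} (whose induction over the amalgamation steps of $\mathcal C$ assembles universal covers from vertex pieces glued along Freiheitssatz-provided trees), upgrading them to Proposition \ref{aspherical2}, and then declares the proof of Theorem \ref{thm2} to be the same as the $1$-ended case of Theorem \ref{thm1}. The only caution is that your inductive hypothesis should carry not just proper asphericity at infinity but the full filtration data of Lemma \ref{enhance} (telescopic, i.e.\ pro-(finitely generated free) with projection bonding maps), since that is what feeds Corollary \ref{flute}, inward tameness, and the circle/Hawaiian-earring dichotomy via \cite{CLQR_JPAA}.
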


\section{Preliminaries}
\indent Given a non-compact (strongly) locally finite CW-complex $Y$, a {\it
proper ray} in $Y$ is a proper map $\omega : [0, \infty) \lga Y$. Recall that a proper map is a
map with the property that the inverse image of every compact subset
is compact. We say that two proper rays $\omega, \omega'$ {\it define the same end}
if their restrictions to the natural numbers $\omega|_{\N},
\omega'|_{\N}$ are properly homotopic. This equivalence relation
gives rise to the notion of {\it end determined by $\omega$} as the
corresponding equivalence class, as well as the space of ends ${\mathcal E}(Y)$ of $Y$ as a compact totally
disconnected metrizable space (see \cite{BQ,Geo}). The CW-complex $Y$ is {\it
semistable} at the end determined by $\omega$ if
any other proper ray defining the same end is in fact properly
homotopic to $\omega$; equi\-va\-lently,
if the fundamental pro-group $pro-\pi_1(Y, \omega)$ is pro-isomorphic to
a tower of groups with surjective bonding homomorphisms (see \cite[Prop. 16.1.2]{Geo}). Recall that the homotopy pro-groups $pro-\pi_n(Y, \omega)$ are represented by the inverse
sequences (tower) of groups
\[
\pi_n(Y, \omega(0))
\stackrel{\phi_1}{\lgaf} \pi_n(Y - C_1, \omega(t_1))
\stackrel{\phi_2}{\lgaf} \pi_n(Y - C_2, \omega(t_2))
\lgaf \cdots
\]
where $C_1
\sub C_2 \sub \cdots \sub Y$ is a filtration of $Y$ by compact subspaces, $\omega([t_i,\infty)) \sub Y - C_i$
and the bonding homomorphisms $\phi_i$ are
induced by the inclusions and basepoint-change isomorphisms. One can show the independence with respect to the filtration. Also, properly homotopic base rays yield pro-isomorphic homotopy pro-groups $pro-\pi_n$, for all $n$. If $Y$ is semistable at each end then we will simply say that $Y$ is semistable at infinity, and in this case two proper rays representing the same end yield the same (up to pro-isomorphism) homotopy pro-groups $pro-\pi_n$. We refer to \cite{Geo,MarSe} for more details.\\

\indent Given a CW-complex $X$, with $\pi_1(X) \cong G$, we will
denote by $\widetilde{X}$ the universal cover of $X$, constructed as
prescribed in (\cite{Geo}, $\S 3.2$), so that $G$ is acting freely
on the CW-complex $\widetilde{X}$ via a cell-permuting left action
with $G \backslash \widetilde{X} = X$. The number of ends of an
(infinite) finitely generated group $G$ represents the number of ends
of the (strongly) locally finite CW-complex $\widetilde{X}^1$, for some (equivalently any)
CW-complex $X$ with $\pi_1(X) \cong G$ and with finite $1$-skeleton,
which is either $1, 2$ or $\infty$ (finite groups have $0$ ends
\cite{Geo,SWa}). If $G$ is finitely presented, then $G$ is {\it
semistable at infinity} if the (strongly) locally finite CW-complex $\widetilde{X}^2$ is so, for some (equivalently, any)
CW-complex $X$ with $\pi_1(X) \cong G$ and with finite $2$-skeleton. Observe that any finite-dimensional locally finite CW-complex is strongly locally finite, see \cite{Geo}.\\

\indent The following result will be crucial for the proof of the main result in this paper.
\begin{proposition} \label{aspherical} Let $\mathcal P = \langle X; r \rangle$ be a finite presentation of a torsion free group with a single (cyclically reduced) relator $r \in F(X)$, and consider the associated $2$-dimensional CW-complex $K_{\mathcal P}$. Then, the (contractible) universal cover $\widetilde{K}_{\mathcal P}$ is properly aspherical at infinity, i.e., for any choice of base ray, the homotopy pro-groups $pro-\pi_n(\widetilde{K}_{\mathcal P}) = 0$ are pro-trivial for $n \geq 2$. Furthermore, the fundamental pro-group $pro-\pi_1(\widetilde{K}_{\mathcal P})$ is pro-(finitely generated free).
\end{proposition}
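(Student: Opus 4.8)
The plan is to combine the classical structure theory of one-relator groups with a proper-homotopy ``Mayer--Vietoris at infinity'' argument, organised as an induction along the Magnus--Moldavanskii hierarchy, while isolating the genuinely homotopical content by a cheap homological observation. First I would dispose of contractibility. Since $G$ is torsion free, the cyclically reduced relator $r$ cannot be a proper power, as a one-relator group $\langle X; w^n\rangle$ with $n\geq 2$ contains an element of order $n$ (see \cite{LS}). By Lyndon's asphericity theorem the presentation complex $K_{\mathcal P}$ is then aspherical, so $\widetilde K_{\mathcal P}$ is contractible and $K_{\mathcal P}$ is a finite $K(G,1)$. Because the homotopy pro-groups at infinity are invariants of $G$, this also frees me to replace $\widetilde K_{\mathcal P}$ by the universal cover of any convenient finite $K(G,1)$ when it is advantageous for the induction.

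Next, a purely homological remark pins down what actually needs proof. Fix a finite subcomplex $C\subset\widetilde K_{\mathcal P}$ and let $Y=\widetilde K_{\mathcal P}-C$ be the complementary subcomplex. Since $\widetilde K_{\mathcal P}$ is a contractible $2$-complex, the long exact sequence of the pair $(\widetilde K_{\mathcal P},Y)$ yields $H_n(Y)\cong H_{n+1}(\widetilde K_{\mathcal P},Y)$ for $n\geq 1$, and these relative groups vanish for $n+1\geq 3$ for dimensional reasons; hence $H_n(Y)=0$ for all $n\geq 2$. Thus every complement of a compactum already has trivial homology above degree $1$, and all the content of the proposition is concentrated in the behaviour of $\pi_1$ at infinity: I must show that, along a cofinal family of compacta, the complements $Y$ have \emph{finitely generated free} fundamental group. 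Granting this, the upgrade to homotopy is essentially formal, since a connected $2$-complex with free fundamental group carries no essential $2$-spheres once its second integral homology vanishes: concretely, if $\pi_1(Y)$ is free then the Cartan--Leray spectral sequence for $\pi_1(Y)\to\widetilde Y\to Y$, together with $\operatorname{cd}(\pi_1 Y)\leq 1$, identifies $H_2(Y)$ with the coinvariants of $\pi_2(Y)=H_2(\widetilde Y)$, while the splitting of the cellular chain complex of $\widetilde Y$ (using that the augmentation ideal of a free group ring is free) shows $\pi_2(Y)$ to be projective over $\mathbb Z[\pi_1 Y]$; vanishing coinvariants then force $\pi_2(Y)=0$, and as $\widetilde Y$ is a simply connected $2$-complex this gives $\pi_n(Y)=0$ for all $n\geq 2$. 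Feeding these vanishing groups and free fundamental groups back into the defining towers shows $pro\text{-}\pi_n(\widetilde K_{\mathcal P})=0$ for $n\geq 2$ and $pro\text{-}\pi_1(\widetilde K_{\mathcal P})$ pro-(finitely generated free), independently of the base ray.

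The heart of the matter is therefore the freeness of $\pi_1$ at infinity, which I would establish by induction along the Moldavanskii hierarchy. After a change of generators arranging that some generator has exponent sum zero in $r$, Magnus rewriting expresses $G$ as an HNN extension $\langle G',t\mid t^{-1}At=B\rangle$ of a one-relator group $G'$ of strictly shorter relator length, with associated subgroups $A,B\leq G'$ finitely generated free by the Freiheitssatz (see \cite{LS, LasRoy}). Modelling the corresponding $K(G,1)$ as $K(G',1)$ with a copy of $K(A,1)\times[0,1]$ (a finite graph crossed with an interval) glued on, the universal cover becomes a tree of spaces over the Bass--Serre tree, with vertex spaces copies of $\widetilde{K(G',1)}$ and edge spaces copies of trees. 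By the inductive hypothesis each vertex space is properly aspherical at infinity with pro-(finitely generated free) fundamental pro-group, the edge spaces are contractible, and the base case $G$ free (including $G\cong\mathbb Z$) is immediate since $\widetilde K$ is proper-homotopy equivalent to a tree whose complements are forests. The proper Seifert--van Kampen and Mayer--Vietoris techniques of \cite{CLQR2, LasRoy} then assemble these pieces along the tree, the decisive point being that amalgamating finitely generated free groups over finitely generated free subgroups keeps the fundamental groups of the complements free of finite rank.

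I expect the main obstacle to be precisely this assembly step. Controlling $pro\text{-}\pi_1$ through the infinitely many gluings dictated by the Bass--Serre tree so that it stays pro-(finitely generated) free, rather than degenerating to something merely pro-free, semistable, or not finitely generated at each stage, is delicate: one must track base rays across the tree, verify that the inclusions of edge spaces behave coherently at infinity, and rule out the accumulation of extra $\pi_1$-generators that the nested gluings could in principle create. This is exactly where the Freiheitssatz-type control on the edge subgroups supplied by \cite{LasRoy} and the proper-homotopy bookkeeping of \cite{CLQR2} are indispensable, and it is also the source of the dichotomy in Theorem~\ref{thm1}, the two possible limiting shapes of the free pro-group $pro\text{-}\pi_1$ corresponding to the stable (circle) and non-stable (Hawaiian earring) cases.
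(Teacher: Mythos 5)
Your opening move genuinely differs from the paper, and it is attractive: the paper (Lemma \ref{enhance}) proves the $\pi_1$ statement and the higher asphericity \emph{simultaneously} inside the induction, getting part (b) geometrically by showing that universal covers of the neighborhoods of infinity $\widehat{K}_{\mathcal P}-\widehat{C}_n$ are assembled from contractible blocks glued along trees and hence are contractible; you instead deduce asphericity at infinity formally from freeness of $\pi_1$ at infinity, which would let one quote \cite[Prop. 2.7]{CLQR2} as a black box. Your chain-level argument is essentially right: $\pi_2(Y)=H_2(\widetilde Y)$ is projective over $\mathbb{Z}[\pi_1 Y]$ when $\pi_1(Y)$ is free, and Cartan--Leray gives $(\pi_2 Y)_{\pi_1 Y}\cong H_2(Y)=0$. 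But projectivity plus vanishing coinvariants does not by itself force $\pi_2(Y)=0$: you need the theorem that projective modules over free group rings are free (Bass; or Cohn's theorem that $\mathbb{Z}[F]$ is a fir, which is what is required here since these modules are not finitely generated). That is a real, citable ingredient missing from your sketch, though easily added. Note also that your hypothesis must be that honest complements of a cofinal family of compacta (in some complex proper homotopy equivalent to $\widetilde{K}_{\mathcal P}$) have finitely generated free $\pi_1$; an abstract pro-isomorphism of $pro$-$\pi_1$ with a free tower would not suffice, and it is exactly the stronger statement that \cite[Prop. 2.7]{CLQR2} provides.

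The genuine gaps are in your inductive step, which is where your proposal coincides with the paper's actual strategy. First, ``after a change of generators arranging that some generator has exponent sum zero in $r$'' is not available. An automorphism of $F(X)$ can make an exponent sum vanish on the abelianization, but it replaces $r$ by a word of in general strictly greater length, destroying the induction on relator length; and when no generator of $r$ has zero exponent sum, $G$ itself is \emph{not} exhibited as an HNN extension of a shorter one-relator group. This is precisely why Magnus--Moldavanskii (and \S 4 of \cite{CLQR2}, which the paper's proof of Lemma \ref{enhance} follows) use the embedding trick: $G$ is embedded in an auxiliary one-relator group that does have a zero-exponent-sum generator, and one must then argue how the properties at infinity of the complex for the auxiliary group descend to the blocks corresponding to $G$ (in the paper: copies of the candidate $\widehat{K}_{\mathcal P}$ sit inside $\widehat{K}_{\mathcal P'}$ glued along lines, and each $\pi_1(U)$ is a free factor of $\pi_1(U')$). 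Your proposal is silent on this case, and it is roughly half of the work.

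Second, your stated ``decisive point'' --- that amalgamating finitely generated free groups over finitely generated free subgroups keeps the fundamental groups of the complements free --- is false as a general principle: amalgams of free groups over free subgroups need not be free (a closed surface group is $F_2 *_{\mathbb{Z}} F_2$). What actually makes the assembly work, and what the Freiheitssatz delivers (Remark \ref{Freiheitssatz}), is that the gluing pieces are \emph{trees}: complements of compacta meet them in disjoint unions of subtrees, and the paper arranges its filtration precisely so that each filtration element meets each gluing tree in a connected subtree. Consequently every van Kampen decomposition at infinity is a free product (amalgamation over trivial groups), which is what preserves freeness and finite rank. As written, your decisive point would not rule out surface-like amalgams appearing at infinity, so this step of your argument needs to be replaced by the tree-gluing bookkeeping of \cite{CLQR2, LasRoy}.
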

Observe that the universal cover $\widetilde{K}_{\mathcal P}$ above is already known to be contractible (see \cite{DV}) and semistable at infinity (see \cite{MiTsch}), and hence its homotopy pro-groups do not depend (up to pro-isomorphism) on the choice of the base ray.
\begin{remark} \label{Freiheitssatz} Recall that the (finite) $2$-dimensional CW-complex $K_{\mathcal P}$ associated to $\mathcal P$ is constructed as follows. The $0$-skeleton consists of a single vertex and the $1$-skeleton $K^1_{\mathcal P}$ consists of a bouquet of circles, one for each element of the basis $x_i \in X$, all of them sharing the single vertex in $K_{\mathcal P}$. Finally $K_{\mathcal P}$ is obtained from $K^1_{\mathcal P}$ by attaching a $2$-cell $d$ via a PL map $S^1 \lga K^1_{\mathcal P}$ which spells out the single relator $r$. Note that every lift in the universal cover $\tilde{d} \sub \widetilde{K}_{\mathcal P}$ of the $2$-cell $d \sub K_{\mathcal P}$ is a disk as $r$ is a cyclically reduced word. Moreover, by the Magnus' Freiheitssatz (see \cite{LS, Mag}) every subcomplex of the $1$-skeleton $K^1_{\mathcal P}$ not containing all the $1$-cells involved in the relator $r$ lifts in the universal cover $\widetilde{K}_{\mathcal P}$ to a disjoint union of trees.
\end{remark}
Proposition \ref{aspherical} follows immediately from the following lemma, which is an enhancement of \cite[Prop. 2.7]{CLQR2}.
\begin{lemma} \label{enhance} Let $\mathcal P = \langle X; r \rangle$ be a finite, torsion free group presentation with a single (cyclically reduced) relator $r \in F(X)$, and consider the associated $2$-dimensional CW-complex $K_{\mathcal P}$. Then, the universal cover $\widetilde{K}_{\mathcal P}$ is proper homotopy equivalent to another $2$-dimensional CW-complex $\widehat{K}_{\mathcal P}$ which has a filtration $\widehat{C}_1 \sub \widehat{C}_2 \sub \cdots \sub \widehat{K}_{\mathcal P}$ by finite contractible subcomplexes satisfying (for any choice of base ray):
\begin{itemize}
\item[(a)] The tower $\{1\} \leftarrow \pi_1(\widehat{K}_{\mathcal P} - \widehat{C}_1) \leftarrow \pi_1(\widehat{K}_{\mathcal P} - \widehat{C}_2) \leftarrow \cdots$ consists of finitely generated free groups of increasing rank, with the bonding maps being the obvious projections, and
\item[(b)] The tower $\{1\} \leftarrow \pi_n(\widehat{K}_{\mathcal P} - \widehat{C}_1) \leftarrow \pi_n(\widehat{K}_{\mathcal P} - \widehat{C}_2) \leftarrow \cdots$ is the trivial tower, $n \geq 2$.
\end{itemize}
\end{lemma}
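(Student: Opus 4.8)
My plan is to deduce Proposition \ref{aspherical} from Lemma \ref{enhance} and then concentrate all the work on the lemma. Granting the lemma, a proper homotopy equivalence induces pro-isomorphisms of the pro-homotopy groups, and since $\widetilde{K}_{\mathcal P}$ is contractible and semistable at infinity (so that the pro-$\pi_n$ do not depend on the base ray), it suffices to identify the pro-groups represented by the two towers: tower (b) is the trivial tower, giving pro-$\pi_n = 0$ for $n \ge 2$, while tower (a) represents pro-$\pi_1(\widetilde{K}_{\mathcal P})$ as a tower of finitely generated free groups, i.e.\ pro-(finitely generated free). So the entire weight falls on constructing $\widehat{K}_{\mathcal P}$ together with its filtration.

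For the lemma, the organizing principle is the Freiheitssatz picture recorded in Remark \ref{Freiheitssatz}. Fix a generator $x_0 \in X$ occurring in $r$ and put $Y = X - \{x_0\}$. Since $Y$ omits a generator of $r$, the subcomplex $K^1_Y$ lifts to a disjoint union of trees; that is, the $\langle Y\rangle$-coset subgraphs of the Cayley graph $\widetilde{K}^1_{\mathcal P}$ are contractible. Collapsing them displays $\widetilde{K}_{\mathcal P}$ as a graph of spaces over the Bass--Serre tree of the induced splitting of $G$, in which every vertex and edge space is a tree and the relator disks (each an embedded disk, as $r$ is cyclically reduced) carry all the topology. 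The plan is to use this decomposition into contractible pieces glued in a tree-like pattern to build $\widehat{K}_{\mathcal P}$, proper homotopy equivalent to $\widetilde{K}_{\mathcal P}$, together with an exhaustion by finite subcomplexes $\widehat{C}_i$ that are unions of finitely many of these tree-pieces and disks. Each such $\widehat{C}_i$ then deformation retracts to a point --- being a finite tree of trees with embedded disks attached along tree paths --- so it is finite and contractible, as required.

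The core of the argument is the analysis of the complements $\widehat{K}_{\mathcal P} - \widehat{C}_i$. I would show that each complement deformation retracts onto a finite bouquet of circles, by collapsing the contractible tree-pieces and pushing the relator disks off along their tree-like boundary arcs, the Freiheitssatz guaranteeing that beyond any finite stage the $Y$-part is a forest so that nothing obstructs the collapse. Once a complement is homotopy equivalent to a wedge of circles, its fundamental group is finitely generated free and all higher $\pi_n$ vanish, so (a) and (b) follow at once. It then remains to see that enlarging $\widehat{C}_i$ to $\widehat{C}_{i+1}$ only reveals additional independent loops nearer infinity while retaining the old ones: the inclusion of complements becomes, up to homotopy, the projection of a wedge of circles that forgets the new summands, so the bonding maps are the asserted projections and the ranks are non-decreasing.

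The main obstacle I anticipate is precisely this last global bookkeeping: arranging the filtration so that each $\widehat{C}_i$ is honestly contractible and so that the induced maps on the free fundamental groups of the complements are exactly the projections with non-decreasing rank. The per-piece facts --- vertex and edge spaces are contractible trees, relator disks are embedded, and the complements retract to graphs --- are direct consequences of the Freiheitssatz and of cyclic reducedness; the difficulty lies in controlling how the disks are distributed through the tree of spaces, which is what upgrades \cite[Prop. 2.7]{CLQR2} to the explicit filtration stated here. The two model situations --- a Bass--Serre tree that is a line (planar $\widetilde{K}_{\mathcal P}$, rank stabilizing at $1$, the circle/surface case) versus one that genuinely branches (rank tending to infinity, the Hawaiian-earring case) --- show that the bookkeeping is governed by the branching of the tree, which is where I would focus the careful combinatorial estimates.
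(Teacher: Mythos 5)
Your reduction of Proposition \ref{aspherical} to Lemma \ref{enhance} matches the paper, but your construction for the lemma itself rests on a decomposition that does not exist. Choosing $x_0 \in X$ and $Y = X - \{x_0\}$ does not induce a splitting of $G$: a Magnus-type splitting (an HNN decomposition over Magnus subgroups) is available only when some generator has exponent sum zero in $r$, which can fail for every generator (e.g.\ $\langle a,b \, ; \, a^2b^{-3}\rangle$); and even when such a splitting exists, the vertex space of the induced graph-of-spaces structure on $\widetilde{K}_{\mathcal P}$ is the universal cover of a presentation complex $K_{\mathcal P'}$ with a shorter relator --- a $2$-complex containing relator disks --- not a tree; only the \emph{edge} spaces are Magnus trees. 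Your stronger claim, that $\widetilde{K}_{\mathcal P}$ splits into trees with the relator disks ``attached along tree paths'', is actually impossible: if a $2$-cell were attached along a curve lying in a contractible union of trees, the attachment would create a nontrivial element of $\pi_2$, contradicting the contractibility of $\widetilde{K}_{\mathcal P}$. In reality each disk boundary must traverse $x_0$-edges, which lie in no $Y$-tree, and the resulting incidence pattern of trees, $x_0$-edges and disks has cycles (already for $\langle a,b \, ; \, aba^{-1}b^{-1}\rangle$, where consecutive vertical lines of the grid are joined by infinitely many horizontal edges). A further technical problem: collapsing the $Y$-trees is not a proper map (an infinite tree cannot be properly collapsed to a point), so even where such a collapse is homotopically harmless it cannot be used to transfer proper homotopy or pro-$\pi_*$ information.

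What is missing is precisely the engine of the paper's argument: Magnus's double induction on the length of $r$, following \cite[Prop. 2.7]{CLQR2}. When some generator has exponent sum zero, the paper passes to an intermediate cover of $K_{\mathcal P}$ built from blocks $K_{\mathcal P'}$ with shorter relator; the modified universal covers $\widehat{K}_{\mathcal P'}$ --- whose contractibility, filtrations, and properties (a), (b) are known only by the inductive hypothesis, since these pieces are themselves one-relator complexes and cannot be analyzed in one step --- are glued along Magnus trees to form $\widehat{K}_{\mathcal P}$; the filtration is assembled from the blocks' filtrations, (a) follows from the generalized van Kampen theorem, and (b) follows because the universal cover of each complement is a union of contractible pieces glued along connected subtrees. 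When no generator has exponent sum zero --- the case your construction cannot even begin --- the paper does not decompose $\widetilde{K}_{\mathcal P}$ at all: it embeds it as a block inside the complex $\widehat{K}_{\mathcal P'}$ of an auxiliary presentation that does have such a generator, and obtains the filtration by intersection, using that each $\pi_1(U)$ is then a free factor of $\pi_1(U')$. Your proposal contains no induction and no mechanism for this second case, so the ``bookkeeping'' you defer to the end is not the real difficulty; the difficulty is that the pieces of any valid decomposition are themselves one-relator complexes, controllable only by induction on the relator length.
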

\begin{remark} In fact, the proper homotopy equivalence in the statement of Lemma \ref{enhance} can be replaced by a ``strong" proper homotopy equivalence, i.e., a (possibly infinite) sequence of internal collapses and/or expansions, carried out in a proper fashion. See \cite{CLQR2} for more details.
\end{remark}
\begin{proof} Indeed, the proof of this lemma is that of \cite[Prop.2.7]{CLQR2}, only that now we extend it, by taking a closer look, so that it covers part (b) here. The proof there goes by induction on the length of the relator $r \in F(X)$ in such a presentation $\mathcal P = \langle X; r \rangle$. It consists of a simultaneous double induction argument keeping track of two possible cases, depending on whether there is a generator in $X$ whose exponent sum in $r$ is zero or not, see \S 3 and \S 4 in \cite{CLQR2} respectively.\\
\indent In the first case (\S 3 in \cite{CLQR2}), one shows that the induction lies
on the fact that $K'_{\mathcal P}$, an intermediate cover of the CW-complex $K_{\mathcal P}$, is made out, up to homotopy, of blocks $K_{\mathcal P'}$, where $\mathcal P'$ satisfies the inductive hypothesis. In fact, its universal cover $\widetilde{K_{\mathcal P'}}$ is being slightly altered (within their proper homotopy type) to a CW-complexes $\widehat{K}_{\mathcal P'}$ so that their copies can be assembled together resulting into a new CW-complex $\widehat{K}_{\mathcal P}$ strongly proper homotopy equivalent to the universal cover of $K_{\mathcal P}$. This new CW-complex $\widehat{K}_{\mathcal P}$ consists of copies of the various CW-complexes $\widehat{K}_{\mathcal P'}$ above, glued together along trees (which were already present in the universal cover of $K_{\mathcal P}$, that correspond	to the intersections of the different copies of $K_{\mathcal P'}$ and whose existence is a consequence of the Magnus' Freiheitssatz, see Remark \ref{Freiheitssatz}.\\
\indent The desired filtration for $\widehat{K}_{\mathcal P}$ is then the result of assembling the filtrations we encounter on the various complexes $\widehat{K}_{\mathcal P'}$, which already have one by induction, as we grow towards infinity. This can be
carefully done in such a way that if two of these CW-complexes $\widehat{K}_{\mathcal P'}$
meet along a tree inside $\widehat{K}_{\mathcal P}$ then each of the members of the
corresponding filtration for each of them intersects that tree in a connected subtree.\\
\indent Finally, given a compact subset $\widehat{C}_n \sub \widehat{K}_{\mathcal P}$ from this resulting filtration, the generalized van-Kampen theorem yields that the fundamental
group $\pi_1(\widehat{K}_{\mathcal P} - \widehat{C}_n)$ is the free product of a free group
together with the various $\pi_1(\widehat{K}_{\mathcal P'} - \widehat{C}'_n)$ (finitely generated free by induction), where $\widehat{C}'_n = \widehat{K}_{\mathcal P'} \cap \widehat{C}_n \neq \emptyset$.\\
\indent The novelty here consists of adding part (b) of the statement to the induction hypothesis,	and observing that each neighborhood of infinity of the form
$U = \widehat{K}_{\mathcal P} - \widehat{C}_n$ is an assembly of the various neighborhoods
of infinity $U' = \widehat{K}_{\mathcal P'} - \widehat{C}'_n$ (with $\widehat{C}'_n = \widehat{K}_{\mathcal P'} \cap \widehat{C}_n \neq \emptyset$) together with all those (contractible) copies $\widehat{K}_{\mathcal P'} \sub \widehat{K}_{\mathcal P}$  which
do not intersect $\widehat{C}_n$. Moreover, if two of the neighborhoods of infinity $U'$
(corresponding to two different copies of $\widehat{K}_{\mathcal P'}$) intersect inside
$\widehat{K}_{\mathcal P}$, then they do it along the various components of $T - \widehat{C}'_n$, where $T \sub \widehat{K}_{\mathcal P}$ is the corresponding tree along which those copies of $\widehat{K}_{\mathcal P'}$ are glued together inside $\widehat{K}_{\mathcal P}$. This way, the universal cover $\widetilde{U}$ of
$U = \widehat{K}_{\mathcal P} - \widehat{C}_n$ is the result of putting together the universal covers $\widetilde{U}'$ of the various neighborhoods of infinity $U' = \widehat{K}_{\mathcal P'} - \widehat{C}'_n$ glued along connected subtrees, together with all those copies $\widehat{K}_{\mathcal P'} \sub \widehat{K}_{\mathcal P}$ which do not intersect $\widehat{C}_n$, each one glued to the rest along a copy of the corresponding tree from the construction indicated above. Thus, the induction hypothesis guarantees that each $\widetilde{U}'$ is a contractible CW-complex and hence part (b) follows for $\widehat{K}_{\mathcal P}$.\\
\indent As for the second case (\S 4 in \cite{CLQR2}), in which there is no generator in $X$ whose exponent sum in $r$ is zero, the proof goes somehow the other way around. An auxiliary CW-complex $K_{\mathcal P'}$ is built. For such $K_{\mathcal P'}$ the induction hypothesis
applies since it has a generator whose exponent sum is zero in the presentation $\mathcal P'$, which lies under the inductive hypothesis for the previous case (\S 3 in \cite{CLQR2}). As above, its universal cover can be slightly altered (within its proper homotopy type)
to a new CW-complex $\widehat{K}_{\mathcal P'}$ which is made out of blocks, corresponding
to copies of our candidates for the CW-complex $\widehat{K}_{\mathcal P}$ in question, glued together along copies of the real line. Given an appropriate filtration
$\widehat{C}'_n \sub \widehat{K}_{\mathcal P'}$ by compact subsets (provided by the induction hypothesis) satisfying the required properties for $\widehat{K}_{\mathcal P'}$, one can get the desired filtration on each copy $\widehat{K}_{\mathcal P}$ inside $\widehat{K}_{\mathcal P'}$ simply by considering the intersections $\widehat{C}_n = \widehat{K}_{\mathcal P} \cap \widehat{C}'_n$. Observe that this procedure may yield different choices for the desired filtration on each of those copies of $\widehat{K}_{\mathcal P}$, but they all satisfy the required properties (a)-(b). Indeed, by induction, each neighborhood of infinity in $\widehat{K}_{\mathcal P'}$ of the form $U' = \widehat{K}_{\mathcal P'} - \widehat{C}'_n$ has finitely generated free fundamental group and trivial higher homotopy groups. From here, the argument is similar to the one given above, concluding that the corresponding neighborhoods of infinity $U = \widehat{K}_{\mathcal P} - \widehat{C}_n$ in each copy $\widehat{K}_{\mathcal P}$ inside $\widehat{K}_{\mathcal P'}$ behave in the same way (as each $\pi_1(U)$ is now a free factor of $\pi_1(U')$).
\end{proof}
\indent A tower of groups $F \equiv (\{1\} \lgaf F_1 \lgaf F_2 \lgaf \cdots)$ consisting of finitely generated free groups of non-decreasing rank and the obvious projections as bonding maps will be said to be ``telescopic" (or of telescopic type). One can always associate to any given telescopic tower a $1$-ended locally-finite (simply connected) $2$-dimensional CW-complex $Y_m$, $0 \leq m \leq \infty$, whose fundamental pro-group realizes that telescopic tower as follows. Set $Y_0 = \{\ast\} \times [0, \infty)$ (a copy of ${\R}_+$). Assume $Y_n$ constructed, $n \in {\N} \cup \{0\}$. Then, $Y_{n+1}$ consists of the proper wedge of $Y_n$ and a copy $S^1 \times [n, \infty) \cup D^2 \times \{n\}$ of ${\R}^2$ attached along $Y_0$. Finally, we set $Y_\infty = \cup_{n \geq 0} Y_n$. Indeed, one can easily check that for some $0 \leq m \leq \infty$ and some filtration $\{J_n\}_{n \geq 1}$ of $Y_m$, there is a pro-isomorphism $\psi = \{\psi_n\}_{n \geq 1} : pro-\pi_1(Y_m) \lga F$, where each $\psi_n : \pi_1(Y_m - J_n) \lga F_n$ is an isomorphism between finitely generated free groups. Observe that the proper homotopy type of $Y_m$ can be represented by a subpolyhedron of ${\R}^3$, see the figure below.

\begin{figure} [h!]
\centerline{\psfig{figure=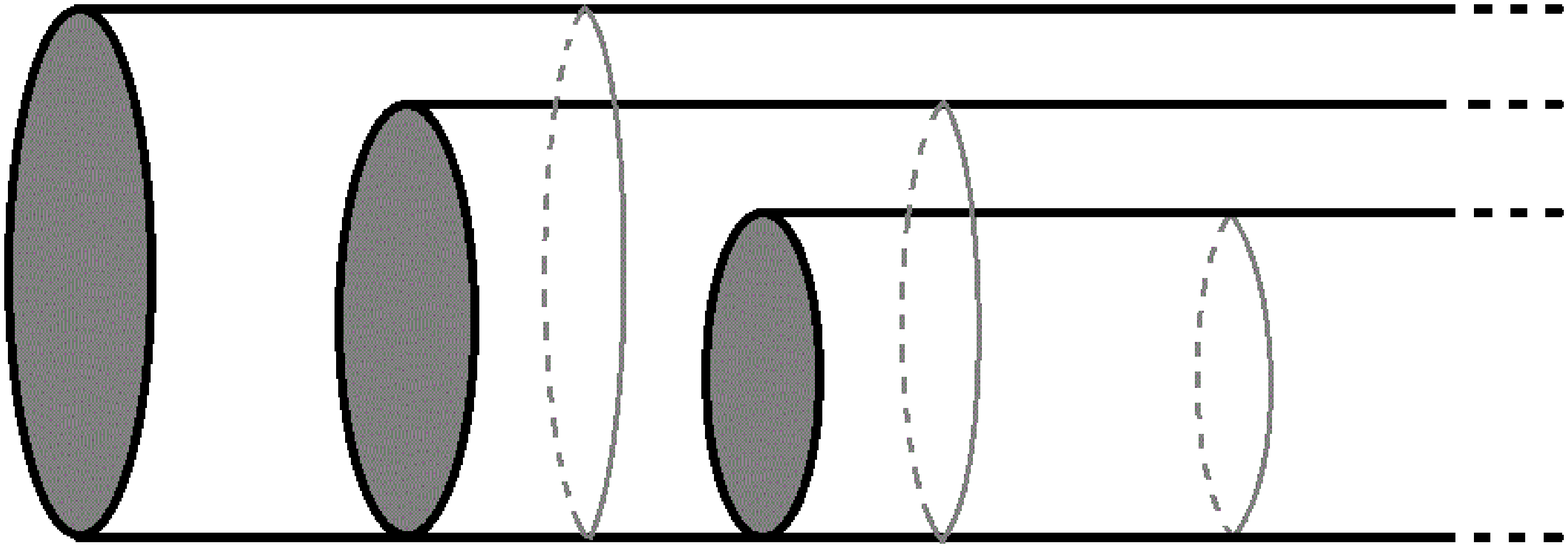,height=3cm,width=8cm}}
\label{figure} \caption{}
\vspace{-2mm}
\end{figure}

\begin{corollary} \label{flute} With the above notation, in the $1$-ended case the universal cover $\widetilde{K}_{\mathcal P}$ is proper homotopy equivalent to either $Y_1 (= {\R}^2)$ or $Y_\infty$.
\end{corollary}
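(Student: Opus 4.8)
The plan is to compare $\widetilde{K}_{\mathcal P}$ with the models $Y_m$ by first matching their fundamental pro-groups and then upgrading that algebraic match to a proper homotopy equivalence. By Proposition \ref{aspherical} (through Lemma \ref{enhance}), together with the contractibility and semistability recalled after its statement, $\widetilde{K}_{\mathcal P}$ is a $1$-ended, strongly locally finite, contractible $2$-dimensional CW-complex which is properly aspherical at infinity and whose fundamental pro-group $pro-\pi_1(\widetilde{K}_{\mathcal P})$ (equivalently, that of $\widehat{K}_{\mathcal P}$, by the proper homotopy equivalence of Lemma \ref{enhance}) is represented by a telescopic tower $F \equiv (\{1\} \lgaf F_1 \lgaf F_2 \lgaf \cdots)$ of finitely generated free groups of non-decreasing rank, with the obvious projections as bonding maps. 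Each model $Y_m$ shares exactly these qualitative features: it is $1$-ended, contractible, strongly locally finite and $2$-dimensional, and its neighborhoods of infinity are homotopy wedges of circles, hence it too is properly aspherical at infinity. Moreover, as recorded in the construction preceding the statement, every telescopic tower is realized up to pro-isomorphism as $pro-\pi_1(Y_m)$ for a suitable $0 \le m \le \infty$.

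First I would pin down $m$ from $F$. Since the ranks of the $F_n$ are non-decreasing integers, only two cases arise. Either they are bounded, hence eventually constant, say equal to $d$; because finitely generated free groups are Hopfian, each surjective projection $F_{n+1} \lga F_n$ between free groups of equal rank is then an isomorphism, so $F$ is pro-isomorphic to the constant tower on $F_d$, i.e. to $pro-\pi_1(Y_d)$. Or the ranks are unbounded, and then $F$ is pro-isomorphic to $pro-\pi_1(Y_\infty)$. In either case $pro-\pi_1(\widetilde{K}_{\mathcal P}) \cong pro-\pi_1(Y_m)$ for a well-defined $m$.

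Next I would realize this pro-isomorphism by an actual proper map and invoke a proper Whitehead theorem. Using the explicit cell structure of $Y_m$, I would send its spine $Y_0$ to a proper base ray of $\widehat{K}_{\mathcal P}$, send the circle of the $n$-th attached plane to a loop representing the matching free generator of $\pi_1(\widehat{K}_{\mathcal P} - \widehat{C}_n)$, and extend over the capping $2$-cells using asphericity, producing a proper map $f \colon Y_m \lga \widehat{K}_{\mathcal P}$ that induces the pro-isomorphism above and the (trivial) isomorphism on $\pi_1$. Because both complexes are simply connected, strongly locally finite, and properly aspherical at infinity, all higher pro-homotopy obstructions vanish, so the proper homotopy version of Whitehead's theorem, as developed in \cite{CLQR2, CLQR_JPAA}, promotes $f$ to a proper homotopy equivalence. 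Composing with the equivalence $\widetilde{K}_{\mathcal P} \simeq \widehat{K}_{\mathcal P}$ of Lemma \ref{enhance} gives $\widetilde{K}_{\mathcal P}$ properly homotopy equivalent to $Y_m$.

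The main obstacle is the final step: showing that $m \in \{1, \infty\}$, i.e. that the stabilized rank $d$ can only be $1$. Excluding $d = 0$ is the easy part: a $1$-ended (hence nonfree) one-relator group is not simply connected at infinity, so the tower cannot be pro-trivial and $d \ge 1$; intuitively the essential loop carried by the single relator persists in every neighborhood of infinity under the cocompact $G$-action. The genuinely delicate exclusion is that of $2 \le d < \infty$, which asks that a bounded, stably free fundamental pro-group of rank $\ge 2$ never occur for these groups. The intended route is the characterization that a torsion free $1$-ended one-relator group has stable (equivalently, bounded-rank free) pro-$\pi_1$ precisely when it is a (virtually) surface group, in which case the stable group is $\mathbb{Z}$ and $d = 1$; otherwise the rank grows without bound and $m = \infty$. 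This circle-versus-Hawaiian-earring dichotomy is exactly what is extracted from \cite{CLQR_JPAA, LasRoy}, and I expect it, rather than the formal comparison above, to carry the real weight of the argument.
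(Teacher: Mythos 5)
Your first two steps coincide with the paper's own argument: Lemma \ref{enhance}(a) is used to produce a pro-isomorphism $\psi\colon pro-\pi_1(Y_m) \to pro-\pi_1(\widehat{K}_{\mathcal P})$ for some $0 \le m \le \infty$, this $\psi$ is realized by a proper map $f \colon Y_m \to \widehat{K}_{\mathcal P}$ (the paper cites \cite[Prop. 3.3]{CMQ} for this; your hand-built map sending the spine to a base ray and the circles to generating loops is essentially a re-derivation of that proposition), and then proper asphericity at infinity on both sides makes $f$ a weak proper homotopy equivalence, hence a proper homotopy equivalence by the proper Whitehead theorem. (Minor point: that theorem should be attributed to \cite[Thm. 5.5.3]{EdHs} or \cite[\S 8]{BQ}, as the paper does, not to \cite{CLQR2, CLQR_JPAA}.) Your reduction of the bounded-rank case to a constant tower via Hopficity is fine, and consistent with how the paper treats the correspondence between telescopic towers and the models $Y_m$.

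The genuine gap is your step 3, and you flag it yourself: the exclusions $d=0$ and $2 \le d < \infty$, which are the entire content of the corollary beyond the formal comparison, are asserted rather than proved. Your argument for $d \ne 0$ ("the essential loop carried by the relator persists in every neighborhood of infinity") is an intuition, not a proof; simple connectivity at infinity is a delicate property, and ruling it out requires a real input (for instance $H^2(G;\Z G)\neq 0$ for type-$\mathcal F$ groups of cohomological dimension $2$, or the paper's route below). And the "characterization" you invoke for $2 \le d < \infty$ --- stable pro-$\pi_1$ iff virtually surface --- is stated vaguely and attributed loosely to \cite{CLQR_JPAA, LasRoy}, with no indication of which results give it or why. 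The paper's actual mechanism is concrete and geometric: $Y_m$ thickens to a regular neighborhood in ${\R}^3$, a $3$-manifold whose boundary has exactly $m+1$ plane components ($m=0$ gives a half-space with a single plane); since $\widetilde{K}_{\mathcal P}$ is the universal cover of a finite $2$-complex with $1$-ended torsion free one-relator fundamental group, \cite[Cor. 5.11, 5.14]{CLQR_JPAA} force the number of plane boundary components to be either two or infinite, whence $m=1$ or $m=\infty$, and \cite[Thm. 5.17]{CLQR_JPAA} identifies the first case with virtually surface groups. Your claimed characterization can indeed be assembled from the same source (this is what the remark immediately following Corollary \ref{flute} does, combining \cite[Thm. 5.1]{CLQR_JPAA}, \cite[Cor. 5.14]{CLQR_JPAA} and \cite[Thm. 5.17]{CLQR_JPAA} with proper $3$-realizability of one-relator groups from \cite{CLQR2, LasRoy}), but as written your proposal leaves the decisive step unestablished.
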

\begin{proof} According to the above, by Lemma \ref{enhance} (a), there is some $0 \leq  m \leq  \infty$ and a pro-isomorphism $\psi = \{\psi_n\}_n : pro-\pi_1(Y_m) \lga pro-\pi_1(\widehat{K}_{\mathcal P})$, with each $\psi_n : \pi_1(Y_m - J_n) \lga \pi_1(\widehat{K}_{\mathcal P} - \widehat{C}_n)$ being an isomorphism between finitely generated free groups. Moreover, by Lemma \ref{enhance} (b) and \cite[Prop. 3.3]{CMQ}, there is a proper map $f : Y_m \lga \widehat{K}_{\mathcal P}$ inducing the pro-isomorphism $\psi$; in fact, $f$ is a weak proper homotopy equivalence, as $Y_m$ is clearly properly aspherical at infinity as well, and hence $f$ induces pro-isomorphisms between all the homotopy pro-groups. Therefore, by the corresponding proper Whitehead theorem (see \cite[Thm. 5.5.3]{EdHs} or \cite[$\S$ 8]{BQ}, for instance) $f$ is in fact a proper homotopy equivalence.\\
\indent It remains to show that $m = 1$ or $\infty$. For this, observe that $m > 0$ since otherwise $\widehat{K}_{\mathcal P}$ (and hence $\widetilde{K}_{\mathcal P}$) would be proper homotopy equivalent to a $3$-manifold with a single plane on its boundary (as $Y_0 = {\R}_+$ thickens to a $3$-dimensional half-space), which is not possible by \cite[Cor. 5.14]{CLQR_JPAA}. Furthermore, $Y_m$ (and hence $\widetilde{K}_{\mathcal P}$) must be proper homotopy equivalent to a $3$-manifold with boundary (by means of a regular neighborhood of the subpolyhedron of ${\R}^3$ in the figure above) which can only have either two or infinitely many plane boundary components, by \cite[Cor. 5.11, 5.14]{CLQR_JPAA}. The rest of the proof follows from this and the fact that the first option only occurs in the case of a virtually surface group, see \cite[Thm. 5.17]{CLQR_JPAA}.
\end{proof}
\begin{remark} In terms of \cite{CLQR_JPAA}, every $1$-ended, torsion free one-relator group is proper $2$-equivalent to either ${\Z} \times {\Z}$ or ${\F}_2 \times {\Z}$ by \cite[Thm 5.1]{CLQR_JPAA}, as one relator groups are properly $3$-realizable, see \cite{CLQR2, LasRoy}; in fact, given a presentation $\mathcal P$ as above, the universal cover of $K_{\mathcal P}$ itself is proper homotopy equivalent to a $3$-manifold (by considering a regular neighborhood of the above subpolyhedron in ${\R}^3$) with no need to take wedge with a single $2$-sphere, thus answering in the affirmative a conjecture posed in \cite{CLQR2} (in the torsion free case). Observe that the third option ${\Z} \times {\Z} \times {\Z}$ from \cite[Thm 5.1]{CLQR_JPAA} is ruled out by \cite[Cor. 5.14]{CLQR_JPAA}, and the first option ${\Z} \times {\Z}$ only occurs in the case of a virtually surface group, by \cite[Thm. 5.17]{CLQR_JPAA}.
\end{remark}
\section{Proof of the main results}
The purpose of this section is to prove Theorems \ref{thm1} and \ref{thm2}. For this, we need the following previous result, which is a combination of other well known results.
\begin{lemma} \label{CS} Let $X$ be a locally finite $n$-dimensional (PL)CW-complex. If the following two conditions hold:
\begin{itemize}
\item[(a)] $X$ is inward tame, and
\item[(b)] For any choice of base ray, the fundamental pro-group $pro-\pi_1(X)$ is pro-(finitely generated free)
\end{itemize}
then the product $X \times I^{2n+5}$ admits a $\mathcal Z$-compactification, with $I = [0,1]$.
\end{lemma}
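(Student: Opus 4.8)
The plan is to reduce the statement to the obstruction-theoretic criterion for $\mathcal{Z}$-compactifiability and to show that hypotheses (a) and (b) force the relevant obstructions to vanish. Recall that, for an inward tame space, the classical results of Chapman and Siebenmann characterize $\mathcal{Z}$-compactifiability (of a Hilbert cube manifold, or of a finite-dimensional space after a suitable thickening) by the vanishing of two end obstructions: a reduced projective class obstruction $\sigma_\infty \in \varprojlim^{1}\{\widetilde K_0(\Z[\pi_1(N_i)])\}$ and a Whitehead-torsion obstruction $\tau_\infty \in \varprojlim^{1}\{\mathrm{Wh}(\pi_1(N_i))\}$, where $\{N_i\}$ runs over a cofinal sequence of clean neighborhoods of infinity. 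Hypothesis (a) guarantees that each $N_i$ is finitely dominated, so that these obstructions are defined. First I would fix a cofinal filtration of $X$ by compacta together with a base ray, producing the tower of fundamental pro-groups and the two associated towers of $K$-groups above.

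The key point is that hypothesis (b) kills both obstructions. Since $pro\text{-}\pi_1(X)$ is pro-(finitely generated free), the tower $\{\pi_1(N_i)\}$ is pro-isomorphic to a tower $\{F_i\}$ of finitely generated free groups. As $\widetilde K_0(\Z[F]) = 0$ and $\mathrm{Wh}(F) = 0$ for every finitely generated free group $F$, and as both $\widetilde K_0(\Z[-])$ and $\mathrm{Wh}(-)$ are functorial and therefore carry pro-isomorphisms to pro-isomorphisms, the towers $\{\widetilde K_0(\Z[\pi_1(N_i)])\}$ and $\{\mathrm{Wh}(\pi_1(N_i))\}$ are pro-isomorphic to pro-trivial towers, hence are themselves pro-trivial. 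In particular their derived limits vanish, so $\sigma_\infty = 0$ and $\tau_\infty = 0$. This step is essentially formal once the functoriality and the vanishing of $\widetilde K_0$ and $\mathrm{Wh}$ of free groups are in hand.

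With the obstructions vanishing, I would invoke the $\mathcal{Z}$-compactification machinery. Crossing with the Hilbert cube $Q$ turns $X$ into a Hilbert cube manifold $X \times Q$ (by Edwards' theorem, as $X$ is a locally compact, finite-dimensional ANR), whose neighborhoods of infinity have the same fundamental pro-group as those of $X$ and which is again inward tame; the Chapman--Siebenmann boundary theorem then yields a $\mathcal{Z}$-compactification of $X \times Q$. The remaining task is to pass from this infinite-dimensional statement to the finite-dimensional conclusion, that is, to realize the compactification on the finite-dimensional thickening $X \times I^{2n+5}$ rather than on $X \times Q$. This is where the dimension $2n+5$ enters: it provides enough room to carry out the compactification finite-dimensionally, either via the finite-dimensional version of the Chapman--Siebenmann criterion applied to a regular-neighborhood (manifold) thickening of $X$, of dimension $\dim(X \times I^{2n+5}) = 3n+5$, or via a finite-dimensional realization of the $\mathcal{Z}$-set produced for $X \times Q$.

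I expect this last step to be the main obstacle. The $K$-theoretic vanishing and the verification that inward tameness is inherited by the thickening are routine, but matching the hypotheses of the (finite-dimensional) $\mathcal{Z}$-compactification theorem requires care: one must check that $X \times I^{2n+5}$ genuinely meets the dimensional and tameness requirements of the quoted theorem, and that the possibly non-stable behavior of $pro\text{-}\pi_1(X)$ (the ranks of the free groups $F_i$ may grow without bound, as already happens for $Y_\infty$) causes no difficulty. It is precisely because $\mathcal{Z}$-compactification, unlike a genuine manifold completion, does not require (perfect) semistability at infinity that the argument can proceed on the strength of the vanishing of $\sigma_\infty$ and $\tau_\infty$ alone.
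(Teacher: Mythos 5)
Your main line of argument coincides with the paper's: cross with the Hilbert cube so that $X \times I^\infty$ is a Hilbert cube manifold \cite{W,Ed}, observe that it inherits hypotheses (a) and (b) (being proper homotopy equivalent to $X$, with neighborhoods of infinity of the form $M_i \times I^\infty$), and kill the Chapman--Siebenmann obstructions \cite{CS} using $\widetilde K_0(\Z[F])=0$ and $\mathrm{Wh}(F)=0$ for finitely generated free $F$, together with functoriality applied to the pro-isomorphism. One small correction: the projective class obstruction lives in $\varprojlim\{\widetilde K_0(\Z[\pi_1(N_i)])\}$, not in $\varprojlim^1$; only the torsion obstruction lives in $\varprojlim^1\{\mathrm{Wh}(\pi_1(N_i))\}$. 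Since pro-triviality of a tower of abelian groups kills both $\varprojlim$ and $\varprojlim^1$, this slip does not affect your vanishing argument.

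The step you defer to the end --- descending from a $\mathcal Z$-compactification of $X \times I^\infty$ to one of $X \times I^{2n+5}$ --- is, however, a genuine gap, and neither of the routes you sketch closes it. That step is exactly the content of Ferry's stable compactification theorem \cite{Fe}: for a locally finite $n$-dimensional polyhedron $X$, if $X \times I^\infty$ is $\mathcal Z$-compactifiable then so is $X \times I^{2n+5}$. This is a substantial theorem, the sole source of the exponent $2n+5$, and the paper simply quotes it. Your first alternative, a ``finite-dimensional version of the Chapman--Siebenmann criterion,'' is not available: for polyhedra the implication ``obstructions vanish $\Rightarrow$ $\mathcal Z$-compactifiable'' is false --- Guilbault \cite{Gui4} constructs a locally finite $2$-dimensional polyhedron that is not $\mathcal Z$-compactifiable although its product with the Hilbert cube (hence, by \cite{Fe}, with $I^{9}$) is --- and for finite-dimensional manifolds the analogous characterization is an open problem, with known partial results requiring hypotheses such as $\pi_1$-stability at infinity, which fails here precisely because the ranks of the free groups in the tower may grow without bound. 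Your second alternative (a ``finite-dimensional realization'' of the $\mathcal Z$-set produced for $X \times I^\infty$) is in effect a restatement of what Ferry proves, not an argument. So the correct completion of your proposal is to cite \cite{Fe} for this last step; everything before it matches the paper's proof.
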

We recall that an ANR $X$ is {\it inward tame} if, for each neighborhood of infinity $N$ there exists a smaller neighbohood of infinity $N' \sub N$ so that, up to homotopy, the inclusion $N' \hookrightarrow N$ factors through a finite complex; equivalently, for every closed neighborhood of infinity $N$ there is a homotopy $H : N \times [0,1] \lga N$ with $H_0 = id_N$ and $\overline{H_1(N)}$ compact, see \cite{Gui3}. One can check that inward tameness is a proper homotopy invariant, and we may think of it as pulling the end of $X$ inside $X$ yielding some kind of finite domination at infinity. It is worth noticing that any $\mathcal Z$-compactifiable ANR must be inward tame, see \cite[Remark 3.8.13]{Gui3}. Also, observe that as a combination of the results in \cite{Fe} and \cite{Gui4}, there is an example of a locally finite $2$-dimensional polyhedron $X$ whose product $X \times I^9$ is $\mathcal Z$-compactifiable but $X$ itself is not.
\begin{proof} Let $I^\infty$ denote the Hilbert cube. It is well known that the product $Y = X \times I^\infty$ is a Hilbert cube manifold (see \cite{W,Ed}) which satisfies again properties (a) and (b) from the statement, as $I^\infty$ is compact and contractible and so $X$ and $Y$ are proper homotopy equivalent. In particular, $Y$ is inward tame. Moreover, the Chapman-Siebenmann obstructions for a Hilbert cube manifold admitting a $\mathcal Z$-compactification (\cite[Thms 3, 4]{CS}, see also \cite[\S 3.8.2]{Gui3}) vanish for $Y$ since $pro-\pi_1(Y)$ can be represented by an inverse sequence
\[
\pi_1(Y) \lgaf \pi_1(N_1) \lgaf \pi_1(N_2) \lgaf \cdots
\]
where $\{N_i\}_i$ is a nested cofinal sequence of neighborhoods of infinity in $Y$ with $\pi_1(N_i)$ a finitely generated free group, $i \geq 1$; in fact, each $N_i$ can be taken as a product $N_i = M_i \times I^\infty$, where $M_i$ is a neighborhood of infinity in $X$. Thus $Y = X \times I^\infty$ admits a $\mathcal Z$-compactification. Finally, the results in \cite{Fe} show that $X \times I^{2n+5}$ admits a $\mathcal Z$-compactification as well.
\end{proof}
We now proceed with the proof of the main results.
\begin{proof}[Proof of Theorem \ref{thm1}] Suppose a given torsion free finitely presented group $G$ admits a finite presentation $\mathcal P = \langle X ; r \rangle$ with a single (cyclically reduced) relator $r \in F(X)$. If $G$ is $2$-ended then $G$ must be the group of integers ${\Z}$ (see \cite[Thm. 5.12]{SWa}) which easily admits a weak $\mathcal Z$-structure just by adding two points as its boundary. Assume now $G$ is $1$-ended. Then, by Corollary \ref{flute}, the universal cover $\widetilde{K}_{\mathcal P}$ is proper homotopy equivalent to either the plane ${\R}^2$ or the locally finite subpolydedron of ${\R}^3$ shown in figure $1$, which are both easily shown to be inward tame, and hence so is $\widetilde{K}_{\mathcal P}$. On the other hand, Proposition \ref{aspherical} ensures condition (b) in Lemma \ref{CS} above. Therefore, the (contractible) CW-complex $\widetilde{K}_{\mathcal P} \times I^9$ admits a $\mathcal Z$-compactification. Observe that the proper, free and cocompact $G$ action on $\widetilde{K}_{\mathcal P}$ yields a proper, free and cocompact $G$ action on $\widetilde{K}_{\mathcal P} \times I^9$ in the obvious way, thus providing a weak $\mathcal Z$-structure on $G$ whose associated weak $\mathcal Z$-boundary has the shape of the $\mathcal Z$-boundary of a $\mathcal Z$-compactification of either the plane or the subpolyhedron shown in figure $1$, see \cite[Cor. 3.8.15]{Gui3}. In the case of the plane this $\mathcal Z$-boundary has the shape of a circle, and in the second case one can easily show that the corresponding $\mathcal Z$-boundary has the shape of a Hawaiian earring, as claimed.\\
\indent Finally, if $G$ is infinite ended then $G$ decomposes as a free product of groups (as $G$ is torsion free) by the Stallings's structure theorem (see [13,27]). Moreover, being $G$ a one-relator group, it follows from Grushko's theorem that $G$ is a free product of a free group and a one-relator group with at most one end. See [22, Prop. II.5.13] for details. Both factors admit a weak $\mathcal Z$-structure and hence so does their free product, by the proof of \cite[Thm. 2.9]{T}.
\end{proof}
Just as we did in section $\S 2$ with respect to the work in \cite{CLQR2}, a closer look at the proofs of \cite[Thm. 1.13]{LasRoy} and \cite[Prop. 1.18]{LasRoy} yields the following generalization of Proposition \ref{aspherical} and Lemma \ref{enhance} (in the $1$-ended case).
\begin{proposition} \label{aspherical2} Let $\mathcal P = \langle X ; R \rangle$ be a finite aspherical presentation of a torsion free, $1$-ended generalized one-relator group $G \in {\mathcal C}$, with each $r \in R$ being a cyclically reduced word in $F(X)$, and consider the associated $2$-dimensional CW-complex $K_{\mathcal P}$. Then, the (contractible) universal cover $\widetilde{K}_{\mathcal P}$ is properly aspherical at infinity, i.e., for any choice of base ray, the homotopy pro-groups $pro-\pi_n(\widetilde{K}_{\mathcal P}) = 0$ are pro-trivial for $n \geq 2$, and the fundamental pro-group $pro-\pi_1(\widetilde{K}_{\mathcal P})$ is pro-isomorphic to a telescopic tower
\end{proposition}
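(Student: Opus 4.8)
The plan is to mirror the strategy of \S 2: just as Lemma \ref{enhance} was obtained by a closer reading of \cite[Prop. 2.7]{CLQR2}, here I would revisit the proofs of \cite[Thm. 1.13]{LasRoy} and \cite[Prop. 1.18]{LasRoy} and enhance them with the higher-dimensional information. Those results already exhibit a CW-complex $\widehat{K}_{\mathcal{P}}$, proper homotopy equivalent to $\widetilde{K}_{\mathcal{P}}$, assembled from blocks glued along trees; the blocks are (altered) universal covers $\widehat{K}_{\mathcal{P}_i}$ of the smaller presentations $\mathcal{P}_i \in \mathcal{C}$ appearing in the amalgamated-product decomposition of $G$, and the gluing trees are the lifts of the subcomplexes carrying the amalgamating free subgroups, whose existence is guaranteed by the Freiheitssatz property of $\mathcal{C}$ (Remark \ref{Freiheitssatz}). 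The argument proceeds by induction on the number of amalgamation steps used to build $G$ out of one-relator groups, the base case being exactly Proposition \ref{aspherical} together with Lemma \ref{enhance}.

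For the inductive step, write $G = G_1 *_F G_2$ with $F$ free and $G_1, G_2 \in \mathcal{C}$ of smaller amalgamation depth. Since each $G_i$ injects into $G$, the vertex spaces of $\widehat{K}_{\mathcal{P}}$ are genuine copies of the blocks $\widehat{K}_{\mathcal{P}_i}$, to which the enhanced inductive hypothesis applies: each carries a filtration by finite contractible subcomplexes whose complementary neighborhoods of infinity $U_i = \widehat{K}_{\mathcal{P}_i} - \widehat{C}'_n$ have finitely generated free fundamental group (with projection bonding maps) and trivial higher homotopy groups. Transplanting the filtrations to $\widehat{K}_{\mathcal{P}}$ via the intersections $\widehat{C}_n = \widehat{K}_{\mathcal{P}} \cap \widehat{C}'_n$, exactly as in the proof of Lemma \ref{enhance}, a neighborhood of infinity $U = \widehat{K}_{\mathcal{P}} - \widehat{C}_n$ becomes an assembly of the $U_i$, glued along the connected subtrees $T - \widehat{C}'_n$, together with all the whole blocks $\widehat{K}_{\mathcal{P}_i}$ that miss $\widehat{C}_n$.

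The two conclusions then come from this tree-of-spaces picture. For the higher homotopy pro-groups, I would pass to the universal cover $\widetilde{U}$: by the inductive part (b) the simply connected pieces $\widetilde{U_i}$ have trivial higher homotopy and so are contractible (Hurewicz and Whitehead), the whole blocks are contractible as universal covers, and the edge spaces are trees, so $\widetilde{U}$ is obtained by gluing contractible complexes along contractible subcomplexes in a tree pattern and is therefore contractible; thus $\pi_n(U) = \pi_n(\widetilde{U}) = 0$ for $n \geq 2$, which gives $pro-\pi_n(\widetilde{K}_{\mathcal{P}}) = 0$, i.e. proper asphericity at infinity. For the fundamental pro-group, the generalized van Kampen theorem applied to this tree-of-spaces decomposition, in which the edge groups are trivial since the edges are trees, expresses $\pi_1(U)$ as the free product of the finitely generated free groups $\pi_1(U_i)$, hence finitely generated free, with the bonding maps of the tower being the obvious free-factor projections. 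In the $1$-ended case the neighborhoods of infinity are connected and cofinal, so this yields a single tower of finitely generated free groups of non-decreasing rank with projection bondings, that is, a telescopic tower, as claimed.

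The step I expect to be the main obstacle is not the existence of the assembly structure, which is supplied by \cite[Thm. 1.13]{LasRoy} and \cite[Prop. 1.18]{LasRoy}, but the enhancement itself: organizing the transplanted filtration so that the tree-of-spaces decomposition of each $U$ is genuinely available at every stage, with each edge tree meeting the filtration term in a \emph{connected} subtree. This is precisely the bookkeeping that makes the iterated-pushout contractibility argument for $\widetilde{U}$ valid and keeps the bonding maps of $pro-\pi_1$ honest projections; it is the generalized-one-relator analogue of the delicate filtration control carried out in the proof of Lemma \ref{enhance}, now complicated by the fact that amalgamation may glue a single block to several others along disjoint trees at once.
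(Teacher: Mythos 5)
Your proposal follows exactly the paper's approach: the paper gives no detailed proof of Proposition \ref{aspherical2}, stating only that, ``just as in \S 2,'' a closer look at the proofs of \cite[Thm.\ 1.13]{LasRoy} and \cite[Prop.\ 1.18]{LasRoy} yields the result as a generalization of Proposition \ref{aspherical} and Lemma \ref{enhance}. Your blockwise induction, the transplanting of filtrations, the tree-of-spaces contractibility argument for $\widetilde{U}$, and the van Kampen computation of $\pi_1(U)$ are precisely the ingredients of the paper's proof of Lemma \ref{enhance}, correctly adapted to the amalgamated-product setting, so your sketch is a faithful (indeed more detailed) elaboration of the intended argument.
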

Thus, the proof of Theorem \ref{thm2} is the same as that of Theorem \ref{thm1} in the $1$-ended case.
\begin{remark}
It is worth pointing out that sometimes the strategy followed to prove that some classes of $1$-ended groups admit a weak $\mathcal Z$-structure includes showing that the fundamental pro-group is pro-(finitely generated free). Under semistability at infinity, this property about the fundamental pro-group amounts to saying that the groups under study are {\it properly $3$-realizable}, i.e., they can be realized by a finite $2$-dimensional CW-complex whose universal cover is proper homotopy equivalent to a $3$-manifold. See \cite[Thm. 1.2]{L}) and \cite[Thm. 5.22]{CLQR_JPAA}. The above is the case of this and other papers, see \cite{Gui1} for instance. At the time of writing it is unknown whether there is a relation between proper $3$-realizability and the existence of a weak $\mathcal Z$-structure.
\end{remark}

\end{document}